\documentclass[12pt, oneside, a4paper]{article}

\usepackage{amsmath}
\usepackage{amsfonts}
\usepackage{amssymb}
\usepackage{amsthm,mathrsfs}
\usepackage{enumerate}
\usepackage{graphicx}
\newtheorem{theorem}{Theorem}[section]
\newtheorem{lemma}[theorem]{Lemma}

\theoremstyle{definition}



\title{\textbf{Groups in which the co-degrees of the irreducible characters are distinct}}
\author{Mahdi Ebrahimi\footnote{ m.ebrahimi.math@ipm.ir}
 \\
 {\small\em  School of Mathematics, Institute for Research in Fundamental Sciences (IPM)},\\{\small\em P.O. Box: 19395--5746, Tehran, Iran}}

\date{}

\begin{document}

\maketitle

\begin{abstract}
 Let $G$ be a finite group and let $\rm{Irr}(G)$ be the set of all irreducible complex characters of $G$. For a character $\chi \in \rm{Irr}(G)$, the number $\rm{cod}(\chi):=|G:\rm{ker}\chi|/\chi(1)$ is called the co-degree of $\chi$. The set of co-degrees of all irreducible characters of $G$ is denoted by $\rm{cod}(G)$. In this paper, we show that for a non-trivial finite group $G$, $|\rm{Irr}(G)|=|\rm{cod}(G)|$ if and only if $G$ is isomorphic to the cyclic group $\mathbb{Z}_2$ or the symmetric group $S_3$.
  \end{abstract}
\noindent {\bf{Keywords:}}  Co-degree, Irreducible character, Camina pair. \\
\noindent {\bf AMS Subject Classification Number:}  20C15, 05C25.

\section{Introduction}
$\noindent$ Let $G$ be a finite group. Also let ${\rm cd}(G)$ be the set of all character degrees of $G$, that is,
 ${\rm cd}(G)=\{\chi(1)|\;\chi \in {\rm Irr}(G)\} $, where ${\rm Irr}(G)$ is the set of all complex irreducible characters of $G$.
  For a character $\chi$ of $G$, the co-degree of $\chi$ is defined as $\rm{cod}(\chi):=|G:\rm{ker}\chi|/\chi(1)$  see \cite{Qian}.
   Clearly, if $\chi$ is irreducible, then $\rm{cod}(\chi)$ is an integer divisor of $|G/\rm{ker} \chi|$.
   Set ${\rm cod}(G):=\{{\rm cod}(\chi)|\; \chi \in {\rm Irr}(G)\}$. It is well known that the
 co-degree set ${\rm cod}(G)$ may be used to provide information on the structure of the group $G$.
  For example, Gagola and Lewis \cite{Gagola} showed that $G$ is nilpotent if and only if $\chi(1)$ divides $\rm{cod}(\chi)$, for every $\chi \in {\rm Irr}(G)$. As another example, Isaacs in \cite{Isaacs} showed that if $G$ is a finite group and $g\in G$, then there exists $\chi \in {\rm Irr}(G)$ so that every prime divisor of $o(g)$ divides $\rm{cod}(\chi)$.

Let $\rm{Irr_1}(G)$ be the set of non-linear characters in $\rm{Irr}(G)$ and $\rm{cd}_1(G)$ be the set of degrees of the characters in $\rm{Irr}_1(G)$. If for a finite group $G$, there exists a non-negative integer $n$ such that $|\rm{cd}_1(G)|=|\rm{Irr}_1(G)|-n$, then $G$ is called a $D_n$-group. In \cite{Bercovich3}, Berkovich et al. gave the classification of $D_0$-groups. In \cite{Bercovich2} and \cite{Bercovich5}, Berkovich and Kazarin classified all $D_1$-groups. In definition of $D_n$-groups,  we may replace the condition $|\rm{cd}_1(G)|=|\rm{Irr}_1(G)|-n$ with $|\rm{cod}(G)|=|\rm{Irr}(G)|-n$ and obtain a new family of finite groups. If for some non-negative integer $n$, $|\rm{cod}(G)|=|\rm{Irr}(G)|-n$, then we say that $G$ is a $D^\prime_n$-group. In this paper, we wish to classify $D^\prime_0$-groups.\\

\noindent \textbf{Main Theorem.}  \textit{Suppose that $G$ is a non-trivial finite group. Then $G$ is a $D^\prime_0$-group if and only if $G$ is isomorphic to $\mathbb{Z}_2$ or $S_3$.}

\section{Preliminaries}
In this section, we state some relevant results on finite groups
needed to prove our main result. We begin with the classification of $D_0$-groups.

\begin{lemma}\label{class}\cite{Bercovich3}.
Let $G$ be a $D_0$-group. Then one of the following cases occurs:\\
\textbf{a)} $G$ is an extra-special $2$-group.\\
\textbf{b)} $G$ is a Frobenius group of order $p^n(p^n-1)$ for some prime power $p^n$ with an abelian Frobenius kernel of order $p^n$ and a cyclic Frobenius complement.\\
\textbf{c)} $G$ is a Frobenius group of order $72$ in which the Frobenius complement is isomorphic to the quaternion group of order $8$.
\end{lemma}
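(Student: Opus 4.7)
The plan is to classify $D_0$-groups by the number $k$ of distinct non-linear character degrees, using the identity
\[
|G| \;=\; |G/G'| + \sum_{i=1}^{k} d_i^{2},
\]
together with the strong restriction that every non-linear degree $d_i$ is realised by exactly one irreducible character.

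The first step handles $k=1$, i.e.\ groups with a unique non-linear irreducible character $\chi$. Passing to $G/\ker\chi$ one may assume $\chi$ is faithful, and the classical classification of groups with a single non-linear irreducible character splits into two subcases. If $G$ is a $p$-group, extra-speciality follows; but extra-special $p$-groups for odd $p$ carry $p-1\geq 2$ non-linear characters all of the same degree $p^{n}$, violating distinctness, so only $p=2$ survives, giving case (a). Otherwise $G$ has a unique minimal normal subgroup $K$ on which $\chi$ restricts non-trivially, and uniqueness of $\chi$ forces $G=K\rtimes H$ to be Frobenius with $K$ elementary abelian of order $p^{n}$, a single $H$-orbit on $\mathrm{Irr}(K)\setminus\{1\}$ (so $|H|=p^{n}-1$), and $H$ containing no non-linear irreducibles (else they would inflate to additional non-linear characters of $G$ of distinct degree). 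Hence $H$ is abelian, and being a Frobenius complement, cyclic, yielding case (b).

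The second step handles $k\geq 2$. I would first rule out non-solvable $G$: every non-abelian simple group has several irreducible characters of equal degree, and this degeneracy propagates through central and chief factors via inflation, so a $D_0$-group must be solvable. For solvable $G$, an induction on $|G|$ via Clifford theory on a minimal normal subgroup $N$ shows that any $\lambda\in\mathrm{Irr}(N)$ with proper inertia $I_G(\lambda)$ would produce multiple irreducibles of $G$ of the same induced degree, contradicting distinctness; careful bookkeeping then forces a fixed-point-free action of a complement and ultimately realises $G$ as a Frobenius group $K\rtimes H$ with abelian kernel. The non-linear irreducibles of such $G$ consist of the non-linear characters of $H$ inflated via $G/K\cong H$, together with one character per $H$-orbit on $\mathrm{Irr}(K)\setminus\{1\}$; distinctness forces a single such orbit, hence $|H|=|K|-1$ and $K$ elementary abelian, while also requiring that the non-linear degrees of $H$ itself are distinct and all different from $|H|$. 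Zassenhaus's classification of Frobenius complements acting regularly on $K\setminus\{1\}$ then gives $H$ cyclic (absorbed into case (b)) or one of seven exceptional nearfield complements; inspecting their character tables leaves only $H=Q_8$ with $|K|=9$, producing case (c).

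The main obstacles I anticipate are the non-solvable exclusion in the $k\geq 2$ regime, which cannot be avoided by pure character-counting and requires explicit multiplicity facts about character degrees of simple groups, and the Clifford-theoretic descent forcing Frobenius structure, where one must carefully track how characters of factor groups interact with induced characters to avoid creating degree collisions. Once the Frobenius reduction is in hand, the final classification is an explicit arithmetic check against Zassenhaus's finite list of exceptional complements.
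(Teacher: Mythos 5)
First, note that the paper does not actually prove this statement: Lemma \ref{class} is quoted from Berkovich, Chillag and Herzog \cite{Bercovich3}, so there is no in-paper argument to compare yours against, and what you have written is a reconstruction of an external proof. Your overall architecture (treat the case of a unique non-linear character via the Seitz-type classification, then reduce the remaining case to a Frobenius group and invoke Zassenhaus) has the right shape, but two of your steps do not work as stated. The exclusion of non-solvable groups by letting degree collisions ``propagate through central and chief factors via inflation'' is not an argument: a non-abelian chief factor $M/N$ is in general not a quotient of $G$, so its repeated character degrees do not inflate to characters of $G$; one only gets an immediate contradiction when $G$ has a non-abelian simple quotient (e.g.\ when $G=G'$), and a non-solvable $G$ with $G'<G$ needs a separate idea. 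Likewise, the claim that a character $\lambda\in\mathrm{Irr}(N)$ with proper inertia group ``would produce multiple irreducibles of $G$ of the same induced degree'' is false in general: the constituents of $\lambda^G$ have degrees $|G:I_G(\lambda)|\,\psi(1)$ for the various $\psi\in\mathrm{Irr}(I_G(\lambda)\mid\lambda)$, and these can all be distinct, as can the degrees arising from different orbits. The reduction to a Frobenius group with abelian kernel is the hard core of the theorem and cannot be dismissed as bookkeeping.

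Second, your final step contains a concrete factual error. The complement $Q_8$ acting regularly on $E(3^2)\setminus\{1\}$ arises from the \emph{Dickson} nearfield of order $9$, whose multiplicative group is metacyclic but not cyclic; it is not one of the seven exceptional (non-Dickson) nearfields, which have orders $5^2,7^2,11^2,11^2,23^2,29^2,59^2$. So the dichotomy ``$H$ cyclic or one of the seven exceptional complements'' is wrong, and an inspection restricted to those two classes could never produce case (c). The correct split is: $H$ metacyclic (the Dickson case, which contains both the cyclic complements of case (b) and $Q_8$) or one of the seven exceptional complements; one must then determine which non-cyclic metacyclic Frobenius complements of order $p^n-1$ have all non-linear character degrees distinct from each other and from $p^n-1$, and this is where $Q_8$ with $|K|=9$ actually emerges (for instance, the dicyclic group of order $12$ and the generalized quaternion groups of order at least $16$ fail because they have repeated degree $2$).
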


Now we present the classification of rational Frobenius groups due to Darafsheh and sharifi.
\begin{lemma}\label{frobenius}\cite{Darafsheh}.
Let $G$ be a rational Frobenius group. Then $G$ is isomorphic to one of the groups $E(3^n):\mathbb{Z}_2$, $E(3^{2n}):Q_8$ or $E(5^2):Q_8$, where $E(p^n)$ is an elementary abelian $p$-group of order $p^n$. When $G\cong E(3^n):\mathbb{Z}_2$, then $\mathbb{Z}_2$ acts on $E(3^n)$ by inverting every non-identity element.
\end{lemma}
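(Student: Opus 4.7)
The plan is to combine the structural restrictions on Frobenius complements with the characterization of rational groups through power-conjugacy, and then use a bit of modular representation theory to pin down the kernel.

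Write $G = K \rtimes H$ with Frobenius kernel $K$ and complement $H$. Since $H \cong G/K$ is a quotient of the rational group $G$, it is itself rational. Combining this with the classical structure theorem for Frobenius complements (Sylow $p$-subgroups are cyclic for odd $p$, Sylow $2$-subgroups are cyclic or generalized quaternion) and with the elementary observations that a nontrivial rational cyclic group must be $\mathbb{Z}_2$ (for $\mathbb{Z}_n$ abelian, rationality forces $g = g^k$ for every $k$ coprime to $o(g)$, hence $o(g)\le 2$) and that among generalized quaternion groups only $Q_8$ is rational (in $Q_{2^n}$ with $n\geq 4$ an element of order $8$ is $G$-conjugate only to its inverse, not to all its generators), I would conclude $H \cong \mathbb{Z}_2$ or $H\cong Q_8$. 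Either way, $H$ contains an involution acting fixed-point-freely on $K$; by the classical theorem that an fpf involutory automorphism must invert, $K$ is abelian and the involution acts as $x\mapsto x^{-1}$.

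Abelianness of $K$ reduces the $G$-conjugacy class of any $x\in K\setminus\{1\}$ to the $H$-orbit of $x$. Rationality forces this orbit to contain every generator of $\langle x\rangle$, so for $x$ of prime order $q$ the stabilizer $H_x$ of $\langle x\rangle$ in $H$ must surject onto $(\mathbb{Z}/q\mathbb{Z})^*\cong\mathbb{Z}_{q-1}$. The cyclic quotients of $\mathbb{Z}_2$ have order $\leq 2$, while those of subgroups of $Q_8$ have order $\leq 4$ (since $Q_8^{\mathrm{ab}}\cong\mathbb{Z}_2\times\mathbb{Z}_2$ and the three $\mathbb{Z}_4$-subgroups are themselves cyclic), yielding $q=3$ when $H=\mathbb{Z}_2$ and $q\in\{3,5\}$ when $H=Q_8$. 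Applying the same orbit bound to an element of order $q^a$ yields $\varphi(q^a)\leq |H|$, forcing $a=1$, so $K$ is elementary abelian on each Sylow. A mixed-prime element $x=yz\in K_3\times K_5$ then defies rationality: for $k$ with $k\equiv 1\pmod{3}$ and $k\equiv 2\pmod 5$, the equation $h\cdot yz = y\cdot z^k$ forces $hy=y$ (hence $h=1$ by fpf) while $hz\ne z$, a contradiction. Hence $K$ is a $p$-group for a single prime $p\in\{3,5\}$.

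It remains to identify $K$ in each surviving case. For $H=\mathbb{Z}_2$ the action is inversion and $p=3$, giving the family $E(3^n):\mathbb{Z}_2$. For $H=Q_8$ on a $\mathbb{F}_p[Q_8]$-module $K$, fpf-ness forces every irreducible constituent to be faithful, and over both $\mathbb{F}_3$ and $\mathbb{F}_5$ the only faithful irreducible is the $2$-dimensional representation $\rho$ arising from $Q_8\hookrightarrow SL_2(p)$, so $K\cong\rho^{\oplus m}$. For $p=3$, rationality is automatic since $(\mathbb{Z}/3\mathbb{Z})^*=\{\pm 1\}$ is already realized by $-1\in Q_8$, giving $E(3^{2m}):Q_8$ for every $m\geq 1$. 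For $p=5$, rationality requires each line of $K$ to be a $2$-eigenline of some element of order $4$ in $Q_8$; in $\rho$ on $\mathbb{F}_5^2$ this works because the six lines are partitioned into the $\pm 2$-eigenlines of $\langle i\rangle,\langle j\rangle,\langle k\rangle$, but in $\rho^{\oplus m}$ with $m\geq 2$ one finds independent vectors $v,w$ whose required order-$4$ elements disagree, so no single $h\in Q_8$ simultaneously realizes $hv=2v$ and $hw=2w$, forcing $K\cong E(5^2)$. The main obstacle I anticipate is precisely this last incompatibility argument for $p=5$, requiring a careful linear-algebra bookkeeping of how the three cyclic subgroups $\langle i\rangle,\langle j\rangle,\langle k\rangle$ of $Q_8$ act on the eigenlines, so as to cleanly exclude $\rho^{\oplus m}$ for $m\geq 2$ while allowing $m=1$.
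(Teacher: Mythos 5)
This lemma is not proved in the paper at all: it is quoted verbatim from Darafsheh and Sharifi \cite{Darafsheh}, so there is no internal proof to compare against. Judged on its own merits, your outline gets most of the way there --- the inversion argument for the involution, the bound $q-1\leq 4$ on primes dividing $|K|$, the exclusion of mixed $3$--$5$ elements, and the eigenline analysis that singles out $E(5^2)$ inside $\rho^{\oplus m}$ are all sound --- but the very first structural step has a genuine gap.

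You conclude $H\cong\mathbb{Z}_2$ or $H\cong Q_8$ from (i) the Sylow structure of Frobenius complements and (ii) the fact that the only nontrivial rational cyclic or generalized quaternion groups are $\mathbb{Z}_2$ and $Q_8$. But (i) only constrains the \emph{Sylow subgroups} of $H$; it does not say $H$ is itself cyclic or generalized quaternion, and rationality of $H$ does not pass to its Sylow subgroups (e.g.\ $S_3$ is rational while its Sylow $3$-subgroup is not). Frobenius complements such as $\mathbb{Z}_3\rtimes\mathbb{Z}_4$, $SL(2,3)$, or $SL(2,5)$ have all Sylow subgroups cyclic or quaternion yet are neither, and in $\mathbb{Z}_3\rtimes\mathbb{Z}_4$ the elements of order $3$ are even rationally conjugate, so the local condition at odd-order elements alone does not kill these candidates. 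To close the gap you must actually prove that a rational Frobenius complement is trivial, $\mathbb{Z}_2$, or $Q_8$ --- for instance by first noting $|H|$ is even (a nontrivial rational group has even order), that the unique involution of $H$ is central and hence cannot invert an element of odd prime order, and then running the rationality condition on elements of order $4$ (in $\mathbb{Z}_3\rtimes\mathbb{Z}_4$ one has $N_H(\langle y\rangle)=C_H(\langle y\rangle)=\langle y\rangle$ for $y$ of order $4$, so $y\not\sim y^{-1}$), together with the fact that normalizers of nontrivial subgroups of $H$ in $G$ lie in $H$. A second, smaller slip: the bound $\varphi(q^a)\leq|H|$ does not exclude elements of order $9$ when $H=Q_8$, since $\varphi(9)=6\leq 8$; what you need is that the stabilizer of $\langle x\rangle$ embeds (by fixed-point-freeness) as a \emph{cyclic subgroup} of $H$ of order $\varphi(q^a)$, and $Q_8$ has no cyclic subgroup of order $6$.
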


\subsection{Camina pairs}
$\noindent$ Let $N\neq 1$ be a normal subgroup of the finite group $G$. We say the pair $(G,N)$ is a Camina pair if it satisfies the following hypothesis: \\
(F2) If $x \in G-N$, $x$ is conjugate to $xy$ for all $y\in N$. \\
Camina pairs were first introduced by Camina in \cite{Camina}. For the definition of these pairs, he used in \cite{Camina} a character-theoretic approach (see hypothesis (F1) in \cite{Camina}), and proved that his definition is equivalent to hypothesis (F2) above.

\begin{lemma}\cite{Chillag1}.\label{camina}
Let $G$ be a finite group and let $N$ be a non-trivial normal subgroup of $G$. Then $(G,N)$ is a Camina pair if and only if for every $x\in G-N$, we have $|C_G(x)|=|C_{G/N}(xN)|$.
\end{lemma}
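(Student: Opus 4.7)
The plan is to convert hypothesis (F2) into a statement about set equality of two distinguished subsets of $G$ and then finish by a counting argument with centralizer orders. The two subsets in question are the conjugacy class $x^G$ and the full preimage $P := \bigcup_{h\in G}(hxh^{-1})N$ in $G$ of the conjugacy class $(xN)^{G/N}$.

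First I would observe that the canonical projection $G\to G/N$ always sends $x^G$ onto $(xN)^{G/N}$, so the inclusion $x^G\subseteq P$ is automatic for every $x\in G$. The key reformulation is that (F2) is precisely equivalent to the equality $x^G=P$ for every $x\in G-N$. Indeed, if (F2) holds then $xN\subseteq x^G$, and conjugating this inclusion by an arbitrary $h\in G$ gives $(hxh^{-1})N=h(xN)h^{-1}\subseteq hx^Gh^{-1}=x^G$, so every coset making up $P$ is absorbed in $x^G$; conversely, $x^G=P$ forces $xN\subseteq x^G$, which is (F2) at $x$.

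Next I would carry out the counting. The set $P$ is, by construction, a disjoint union of exactly $[G/N:C_{G/N}(xN)]$ cosets of $N$, so $|P|=|N|\cdot[G/N:C_{G/N}(xN)]=|G|/|C_{G/N}(xN)|$. On the other hand, the orbit-stabilizer formula gives $|x^G|=|G|/|C_G(x)|$. Combined with $x^G\subseteq P$, the equality $x^G=P$ is equivalent to $|x^G|=|P|$, i.e.\ to $|C_G(x)|=|C_{G/N}(xN)|$. Feeding this into the reformulation of (F2) from the previous paragraph yields the stated biconditional.

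The only non-routine piece is the set-theoretic reformulation of (F2) as $x^G=P$; once that is identified, the rest is straightforward orbit-stabilizer bookkeeping. I therefore do not anticipate any serious obstacle, and I would expect the written proof to be essentially the two observations above arranged in order.
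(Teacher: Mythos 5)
Your argument is correct. Note that the paper itself gives no proof of this lemma; it is quoted verbatim from Chillag--Macdonald \cite{Chillag1}, so there is no in-paper argument to compare against. Your proof is a complete and self-contained justification, and it is essentially the standard one: hypothesis (F2) at $x$ says exactly that $xN\subseteq x^G$, which (conjugating by all $h\in G$ and using normality of $N$) is equivalent to $x^G$ coinciding with the full preimage $P$ of $(xN)^{G/N}$; since the inclusion $x^G\subseteq P$ is automatic, equality of the finite sets is equivalent to equality of their cardinalities $|G|/|C_G(x)|$ and $|G|/|C_{G/N}(xN)|$. All three steps --- the reformulation of (F2), the count $|P|=|N|\cdot[G/N:C_{G/N}(xN)]$ via disjointness of distinct cosets, and the orbit--stabilizer computation --- are sound, so the proposal would serve perfectly well as a proof of the cited lemma.
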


Assume that $(G,N)$ is a Camina pair such that $G$ is not a Frobenius group with the Frobenius kernel $N$. The pair $(G,N)$ is called an $F2(p)$-pair if either $N$ or $G/N$ is a $p$-group, for some prime $p$. We refer to \cite{Chillag1} and \cite{Chillag2} for a thorough analysis of this and related topics.

\begin{lemma}\label{abelian}
Assume that for some prime $p$, $(G,N)$ is an $F2(p)$-pair.\\
\textbf{a)} \cite{Chillag1}. If $P$ is a Sylow $p$-subgroup of $G$, then $P$ is non-abelian. \\
\textbf{b)} \cite{Chillag2}. If $G/N$ is a $p$-group, then $G$ has a normal $p$-complement.
\end{lemma}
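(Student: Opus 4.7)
The plan is to handle both parts by contradiction, leveraging the centralizer characterization of Camina pairs in Lemma \ref{camina} together with the non-Frobenius assumption built into the definition of an $F2(p)$-pair. In both parts, the intended contradiction takes the form of a Frobenius decomposition of $G$ with kernel $N$.

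For part (a), suppose for contradiction that the Sylow $p$-subgroup $P$ of $G$ is abelian. When $G/N$ is a $p$-group one has $PN = G$ (since $PN/N$ is a Sylow $p$-subgroup of the $p$-group $G/N$), and picking $x \in P \setminus N$ the abelianness of $P$ yields $P \leq C_G(x)$, hence $|P| \leq |C_G(x)| = |C_{G/N}(xN)| \leq [G:N] = [P : P \cap N]$, which forces $P \cap N = 1$ and $G = P \ltimes N$. Re-running the Camina identity on each $x \in P \setminus \{1\}$ then yields $C_G(x) = P$ and hence $C_N(x) = 1$, so $P$ acts fixed-point-freely on $N$, contradicting the non-Frobenius hypothesis. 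When $N$ is a $p$-group one has $N \leq P$; the subcase $N = P$ is handled by passing to a Schur--Zassenhaus complement $H$ of $N$ in $G$ and applying the centralizer identity to non-trivial $h \in H$ to obtain $C_G(h) = C_H(h)$ and therefore $C_N(h) = 1$, while the residual subcase $N \lneq P$ is controlled via the character-theoretic form of the Camina property (every $\chi \in {\rm Irr}(G|N)$ vanishes on $G \setminus N$) combined with the abelianness of $P$ to force the same Frobenius contradiction.

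For part (b), with $G/N$ a $p$-group, the goal is to show $N$ is itself a $p'$-group, for then $N$ is the desired normal $p$-complement. Lemma \ref{camina} gives that $|C_G(x)| = |C_{G/N}(xN)|$ is a $p$-power for every $x \in G \setminus N$; in particular $C_N(x)$ is a $p$-group for every such $x$. Assume for contradiction that some prime $q \neq p$ divides $|N|$, and let $Q$ be a Sylow $q$-subgroup of $N$. Then no element of $G \setminus N$ centralizes a non-trivial element of $Q$, and a Frattini-style analysis of $N_G(Q)$ produces a Frobenius kernel inside $N$ on which a $p$-section of $G$ acts fixed-point-freely, again contradicting the $F2(p)$-hypothesis.

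The principal obstacle is in part (b): isolating a $G$-normal Frobenius kernel inside $N$ requires more than the centralizer identity alone, and the delicate step is combining the Camina condition with Frobenius's normal $p$-complement theorem (applied to the $p$-local subgroups of $G$) to ensure the putative Hall $p'$-subgroup of $N$ is actually normal in $G$ rather than merely characteristic in $N$. Part (a) is cleaner: once $P$ is assumed abelian the index inequalities align and essentially force the Frobenius splitting directly.
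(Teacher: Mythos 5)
The paper does not prove this lemma at all: both parts are quoted from the literature (part (a) from Chillag--Macdonald, part (b) from Chillag--Mann--Scoppola), so your sketch has to stand entirely on its own, and it does not. In part (a), the case where $G/N$ is a $p$-group and the subcase $N=P$ (Schur--Zassenhaus) are handled correctly, but the remaining subcase --- $N$ a $p$-group with $1<N\lneq P$ --- is exactly where the content lies, and there you only gesture at ``the character-theoretic form of the Camina property'' forcing ``the same Frobenius contradiction.'' That is not an argument, and a Frobenius decomposition is the wrong target in that subcase anyway: the clean contradiction is numerical. For $x\in P- N$, abelianness of $P$ gives $P\leq C_G(x)$, so $|P|$ divides $|C_G(x)|=|C_{G/N}(xN)|$, which divides $|G/N|$; but since $1\neq N\leq P$, the $p$-part of $|G/N|$ is $|P|/|N|<|P|$, a contradiction with no Frobenius structure needed.

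Part (b) is the genuine failure: your stated reduction --- ``show $N$ is itself a $p'$-group'' --- is false, and the contradiction you aim for does not exist. Take $G=E(3^2)\rtimes Q_8$ (the Frobenius group of order $72$) and $N=E(3^2)\rtimes Z(Q_8)$ of order $18$: one checks $x^G=xN$ for every $x\in G-N$, no subgroup of order $4$ complements $N$, so $(G,N)$ is an $F2(2)$-pair with $G/N\cong \mathbb{Z}_2\times\mathbb{Z}_2$, yet $3$ divides $|N|$ and the normal $2$-complement is $E(3^2)$, a \emph{proper} subgroup of $N$. (Even $(D_8,Z(D_8))$ already shows $N$ need not be a $p'$-group.) So assuming ``some prime $q\neq p$ divides $|N|$'' cannot lead to a contradiction; moreover, even if it did, you would have proved that $N$ is a $p$-group, which is the opposite of your stated goal. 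The actual content of (b) is locating $O_{p'}(G)$ as a Hall $p'$-subgroup of $G$ inside $N$, which is a substantive theorem requiring fusion/transfer-type input beyond the centralizer identity --- precisely the step you flag as the ``principal obstacle'' and leave unresolved. As written, the proposal contains no proof of part (b).
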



\section{Proof of Main Theorem}
$\noindent$ In this section, we wish to prove our main result.

\begin{lemma}\label{basic}
Let $G$ be a non-trivial $D'_0$-group. Then\\
\textbf{a)} For every normal subgroup $N$ of $G$, $G/N$ is a $D'_0$-group.\\
\textbf{b)} If $\chi \in \rm{Irr}(G)$, then $\chi$ is rational valued.\\
\textbf{c)} $|G:G'|=2$.\\
\textbf{d)} If $\chi \in \rm{Irr}(G)-\{1_G\}$, then $\rm{ker} \chi \subseteq G'$.\\
\textbf{e)} $(G,G')$ is a Camina pair.
\end{lemma}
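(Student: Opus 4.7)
The plan is to dispatch (a)--(e) in order, with each part building on the earlier ones and Lemma \ref{class} entering only at the single hardest step. For \textbf{(a)}, inflation embeds $\mathrm{Irr}(G/N)$ into $\mathrm{Irr}(G)$ via $\bar\chi\mapsto\chi$ where $N\subseteq\ker\chi$, and the definition of co-degree gives $\mathrm{cod}(\bar\chi)=|G/N:\ker\bar\chi|/\bar\chi(1)=|G:\ker\chi|/\chi(1)=\mathrm{cod}(\chi)$, so distinctness of co-degrees in $G$ restricts to distinctness in $G/N$. For \textbf{(b)}, any Galois automorphism $\sigma$ preserves both the degree and the kernel of $\chi\in\mathrm{Irr}(G)$, hence preserves $\mathrm{cod}(\chi)$; the $D'_0$ hypothesis then forces $\chi^\sigma=\chi$ for all $\sigma$, so $\chi$ is rational.

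For \textbf{(c)}, (b) forces $G/G'$ to be a rational abelian group, hence $G/G'\cong(\mathbb{Z}_2)^k$. All $2^k-1$ non-trivial linear characters share co-degree $2$, so $D'_0$ yields $k\le 1$. The delicate half is ruling out the perfect case $k=0$: I would take a maximal proper normal subgroup $N$ of $G$, giving a non-abelian simple quotient $G/N$ which is $D'_0$ by (a). Since every non-trivial irreducible of a non-abelian simple group has trivial kernel, distinct co-degrees is equivalent to distinct non-linear character degrees, so $G/N$ would be a $D_0$-group. But none of the three families listed in Lemma \ref{class} is non-abelian simple (extra-special $2$-groups have a non-trivial center; the Frobenius groups have a non-trivial Frobenius kernel), contradiction.

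Part \textbf{(d)} is then a short corollary: if $\chi\neq 1_G$ with $\ker\chi\not\subseteq G'$, then $\ker\chi\cdot G'=G$ since $|G:G'|=2$, so $(G/\ker\chi)'=G'\ker\chi/\ker\chi=G/\ker\chi$, making $G/\ker\chi$ a non-trivial perfect quotient; applying (a) and (c) to it is a contradiction. For \textbf{(e)}, let $\lambda$ be the unique non-trivial linear character of $G$. I would apply Clifford theory to the index-$2$ subgroup $G'$: for each non-linear $\chi\in\mathrm{Irr}(G)$ either $\chi_{G'}$ is irreducible or $\chi=\psi^G$ for some $\psi\in\mathrm{Irr}(G')$; in the induced case $\chi$ already vanishes on $G\setminus G'$. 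In the irreducible-restriction case, $\chi$ and $\chi\lambda$ are two distinct extensions of $\chi_{G'}$, so they agree on $G'$; by (d) both kernels sit inside $G'$, hence they coincide, forcing $\mathrm{cod}(\chi)=\mathrm{cod}(\chi\lambda)$ and then $\chi=\chi\lambda$ by $D'_0$, a contradiction. Thus every non-linear $\chi$ vanishes on $G\setminus G'$, which is Camina's character-theoretic hypothesis (F1), equivalent to (F2), so $(G,G')$ is a Camina pair.

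The only substantive obstacle is the perfect case inside (c); once Lemma \ref{class} has excluded non-abelian simple $D_0$-groups, parts (d) and (e) follow essentially by character-theoretic bookkeeping.
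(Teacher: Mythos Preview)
Your proof is correct and follows essentially the same line as the paper's: parts (a), (b), the perfect case in (c), and (d) are handled identically, and your argument for (e)---forcing $\lambda\chi=\chi$ for every non-linear $\chi$ via the kernel comparison from (d), hence vanishing on $G\setminus G'$---is exactly the paper's, which then finishes via the centralizer criterion of Lemma~\ref{camina} rather than the (F1)$\Leftrightarrow$(F2) equivalence you cite. The only cosmetic difference is that you deduce $G/G'$ is an elementary abelian $2$-group from rationality (part (b)), whereas the paper argues this directly from co-degree collisions among linear characters.
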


\begin{proof}
(a) Since $\rm{cod}(G/N)\subseteq \rm{cod}(G)$ and $\rm{Irr}(G/N)\subseteq \rm{Irr}(G)$, we have nothing to prove.\\
(b) Let $\epsilon$ be a $|G|$th root of unity and $\sigma \in \rm{Gal}(\mathbb{Q}(\epsilon)/\mathbb{Q})$. Then $\chi^\sigma \in \rm{Irr}(G)$ and $\chi^\sigma (1)=\chi(1)$. It is easy to see that $\rm{ker}\chi^\sigma=\rm{ker}\chi$. Thus $\rm{cod}(\chi^\sigma)=\rm{cod}(\chi)$. As $G$ is a $D'_0$-group, $\chi^\sigma=\chi$ and so $\chi^\sigma (x)=(\chi(x))^\sigma=\chi(x)$ for all $x\in G$. Hence $\chi(x)$ is rational for all $x\in G$.\\
(c) Assume that $G=G'$. Let $N$ be a maximal normal subgroup of $G$. By (a), $G/N$ is a non-abelian simple $D'_0$-group. Hence $G/N$ is a non-abelian simple $D_0$-group and so Lemma \ref{class} leads us to a contradiction. Thus $G'<G$. It is easy to see that for every $\lambda \in \rm{Irr}(G/G')$, $\rm{cod}(\lambda)=o(\lambda)$, where $o(\lambda)$ is the order of $\lambda$ in $\rm{Irr}(G/G')\cong G/G'$.  Let $p$ be a prime divisor of $|G/G'|$. There exist $p-1$ distinct irreducible characters $\lambda_1, \lambda_2,\dots, \lambda_{p-1}$ in $\rm{Irr}(G/G')$ so that for every integer $1\leq i\leq p-1$, $\rm{cod}(\lambda_i)=p$. Hence as $G/G'$ is a $D'_0$-group, $p=2$. Therefore $G/G'$ is a $2$-group. If for some $\lambda \in \rm{Irr}(G/G')$, $m:=o(\lambda)>2$, then $\lambda$ and $\lambda^{m-1}$ are distinct irreducible characters in $\rm{Irr}(G/G')$ with same co-degree. It is a contradiction as $G/G'$ is a $D'_0$-group. Thus $G/G'$ is an elementary abelian $2$-group and so $\rm{cod}(G/G')=\{1,2\}$. Hence as $G/G'$ is a $D'_0$-group, we deduce that $|G:G'|=2$.\\
(d) If $\chi$ is linear, then as $|G:G'|=2$, we have $\rm{ker} \chi=G'$. Thus we may assume that $\chi$ is non-linear. By (a), $G/\rm{ker}\chi$ is a $D'_0$-group. Hence using (c), $|G:\rm{ker}\chi G'|=2=|G:G'|$ and so $\rm{ker} \chi\subseteq G'$.\\
(e) Since $|G:G'|=2$, $G$ has a unique non-principal linear character $\lambda$. Let $\chi$ be a non-linear irreducible character of $G$.
 Then $\lambda \chi \in \rm{Irr}(G)$ and $(\lambda \chi)(1)=\chi(1)$.
 By (d), $\rm{ker}\chi$ and $\rm{ker}\lambda\chi$ are subgroups of $G'$. Thus we deduce that $\rm{ker}\lambda\chi=\rm{ker}\chi$ and so $\rm{cod}(\lambda \chi)=\rm{cod}(\chi)$. Thus as $G$ is $D'_0$-group, $\lambda \chi=\chi$.
    Therefore $\chi$ vanishes on $G-\rm{ker}\lambda=G-G'$. Now let $g\in G-G'$.
     As $G/G'$ is abelian, we have $C_{G/G'}(gG')=G/G'$.
    On the other hand $\chi(g)=0$ for all non-linear $\chi\in \rm{Irr}(G)$, and consequently
$|C_G(g)|=1+|\lambda(g)|^2=|G:G'|$.
Thus using Lemma \ref{camina}, $(G,G')$ is a Camina pair.
\end{proof}

\begin{lemma}\label{cyclic}
Let $G$ be a non-trivial $p$-group, for some prime $p$. If $G$ is a $D'_0$-group, then $G\cong \mathbb{Z}_2$.
\end{lemma}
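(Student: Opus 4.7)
The plan is to leverage Lemma~\ref{basic}(c), which forces $|G:G'|=2$ for any nontrivial $D'_0$-group. Since $G$ is assumed to be a $p$-group, $|G:G'|$ must be a $p$-power, so this pins down $p=2$ immediately. Thereafter the argument takes place entirely inside a $2$-group whose derived subgroup has index $2$, a very restrictive configuration.

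The main step will be a one-line application of the Burnside basis theorem. For any finite $p$-group the Frattini subgroup $\Phi(G)=G^pG'$ contains $G'$, so $|G:\Phi(G)|$ divides $|G:G'|=2$; since $\Phi(G)<G$ in every nontrivial finite $p$-group, this gives $|G:\Phi(G)|=2$. By Burnside's basis theorem, $G$ is then generated by a single element, i.e., $G$ is cyclic. A cyclic group is abelian, so $G'=1$, and combined with $|G:G'|=2$ this forces $|G|=2$, hence $G\cong \mathbb{Z}_2$.

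I do not expect a genuine obstacle here: once one unpacks Lemma~\ref{basic}(c), the conclusion is immediate from the standard fact that a non-cyclic $p$-group satisfies $|G:G'|\geq p^2$. An alternative route, which uses more of the paper's own machinery, would instead combine Lemma~\ref{basic}(e) with Lemma~\ref{camina} to obtain $|C_G(x)|=|C_{G/G'}(xG')|=2$ for every $x\in G-G'$; since $Z(G)\leq C_G(x)$, this yields $Z(G)\cap G'=1$, and because a nontrivial normal subgroup of a $p$-group meets the center nontrivially, one concludes $G'=1$ and again $G\cong \mathbb{Z}_2$. Between the two approaches the Burnside basis theorem version is the shorter and more transparent, so that is what I would write up.
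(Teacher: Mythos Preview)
Your proof is correct, but it follows a different path from the paper's. The paper argues by minimal counterexample: if $G$ were a non-abelian $p$-group $D'_0$-group of least order, then $G/Z(G)$ would (by Lemma~\ref{basic}(a) and minimality) be an abelian $D'_0$-group, hence isomorphic to $\mathbb{Z}_2$ by Lemma~\ref{basic}(c); but a group with cyclic central quotient is abelian, a contradiction. Your argument instead uses the Burnside basis theorem to observe directly that $|G:G'|=2$ forces $|G:\Phi(G)|=2$ and hence $G$ is cyclic, so $G'=1$ and $|G|=2$. Your route is shorter and invokes only the classical fact that a non-cyclic $p$-group has $|G:G'|\ge p^2$, avoiding any induction; the paper's route, on the other hand, stays entirely within the internal toolkit of Lemma~\ref{basic} and needs no outside structural result on $p$-groups beyond $Z(G)\neq 1$. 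Your alternative via Lemma~\ref{basic}(e) and Lemma~\ref{camina} is also valid (with the understanding that one is ruling out $G'\neq 1$), and is closer in spirit to the paper's center-based reasoning.
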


\begin{proof}
 Using Lemma \ref{basic} (c), it suffices to show that  $G$ is abelian. On the contrary, suppose $G$ is non-abelian with minimal possible order. Since $G$ is a $p$-group, $Z(G)\neq 1$. Thus by Lemma \ref{basic} (a) and this fact that $G$ has minimal possible order, $G/Z(G)$ is an abelian $D'_0$-group. Thus using Lemma \ref{basic} (c), $G/Z(G)\cong \mathbb{Z}_2$. It is a contradiction. Hence $G$ is abelian.
\end{proof}

\begin{lemma}\label{symetric}
Let $G$ be a Frobenius $D'_0$-group with the Frobenius kernel $G'$. Then $G\cong S_3$.
\end{lemma}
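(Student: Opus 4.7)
The plan is to combine rationality with the classification of rational Frobenius groups to reduce $G$ to a one-parameter family, then force the parameter to be $1$ by a direct character-theoretic count.

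First, by Lemma \ref{basic}(b) every irreducible character of $G$ is rational-valued, so $G$ is a rational Frobenius group. Lemma \ref{frobenius} therefore leaves only three possibilities: $E(3^n):\mathbb{Z}_2$, $E(3^{2n}):Q_8$, and $E(5^2):Q_8$. However, Lemma \ref{basic}(c) forces $|G:G'|=2$, while by hypothesis the Frobenius complement has order $|G:G'|=2$. This rules out the two $Q_8$-cases and leaves $G \cong E(3^n):\mathbb{Z}_2$ with the non-trivial element of $\mathbb{Z}_2$ inverting $N := E(3^n) = G'$.

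Next I would read off $\mathrm{Irr}(G)$ and $\mathrm{cod}(G)$ directly from the Frobenius structure. The two linear characters contribute co-degrees $1$ and $2$. Every non-trivial $\mu \in \mathrm{Irr}(N)$ has inertia subgroup $N$ (as the complement acts fixed-point-freely on the non-trivial characters of $N$), so $\chi_\mu := \mathrm{Ind}_N^G \mu$ is irreducible of degree $2$, and the inversion pairs $\mu$ with $\mu^{-1}$, producing $(3^n-1)/2$ such characters. Because $N$ is normal in $G$, $\chi_\mu$ vanishes on $G\setminus N$, hence $\mathrm{ker}\,\chi_\mu \subseteq N$, and in fact $\mathrm{ker}\,\chi_\mu = \mathrm{ker}\,\mu \cap \mathrm{ker}\,\mu^{-1} = \mathrm{ker}\,\mu$. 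As $N$ has exponent $3$, every non-trivial $\mu$ has order $3$, so $|N:\mathrm{ker}\,\mu|=3$, and therefore $\mathrm{cod}(\chi_\mu) = |G:\mathrm{ker}\,\mu|/2 = 6/2 = 3$, independently of $\mu$. Consequently $\mathrm{cod}(G) = \{1,2,3\}$ while $|\mathrm{Irr}(G)| = 2 + (3^n-1)/2$, and the $D'_0$ equality forces $3^n = 3$, i.e.\ $n = 1$ and $G \cong S_3$.

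The step that takes the most care is the co-degree computation for the non-linear $\chi_\mu$: one needs both that $\chi_\mu$ vanishes off $N$ (so $\mathrm{ker}\,\chi_\mu$ is contained in $N$) and the exponent-$3$ property of $N$ (so the index $|N:\mathrm{ker}\,\mu|$ is uniformly $3$). The resulting collapse of all $(3^n-1)/2$ non-linear characters onto the single co-degree value $3$ is exactly what drives the $D'_0$ condition down to $n=1$.
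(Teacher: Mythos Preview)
Your proof is correct and follows essentially the same route as the paper: use rationality (Lemma~\ref{basic}(b)) together with Lemma~\ref{frobenius} and the index condition $|G:G'|=2$ to pin $G$ down to $E(3^n):\mathbb{Z}_2$, then observe that all induced characters $\lambda^G$ with $\lambda\in\mathrm{Irr}(G')\setminus\{1_{G'}\}$ share the same co-degree, forcing $n=1$. The only difference is one of detail: the paper merely asserts $\mathrm{cod}(\chi_1)=\mathrm{cod}(\chi_2)$ and invokes the $D'_0$-hypothesis, whereas you compute the common value $3$ explicitly and carry out the full count $|\mathrm{Irr}(G)|=2+(3^n-1)/2$.
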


\begin{proof}
 Using  (c) of Lemma \ref{basic}, $|G:G'|=2$. Let $x\in G$ be an element of order $2$. By Lemma \ref{basic} (b), $G$ is rational. Hence by lemma  \ref{frobenius},  $G\cong G':<x>$, where for some integer $n\geqslant 1$, $G'\cong E(3^n)$. Now let $\lambda_1,\lambda_2 \in  \rm{Irr}(G')-\{1_{G'}\}$. Then for every, $i\in\{1,2\}$, $\chi_i:=\lambda_i^G \in \rm{Irr}(G)$ and using lemma \ref{frobenius}, it is easy to see that $\rm{cod}(\chi_1)=\rm{cod}(\chi_2)$. Hence as $G$ is a $D'_0$-group, $n=1$ and $G\cong S_3$.
\end{proof}

\noindent \textit{Proof of Main Theorem.} Suppose that $G$ is a non-trivial $D'_0$-group. By Lemma \ref{basic} (e), $(G,G')$ is a Camina pair. If for some prime $p$, $G$ is a $p$-group, then by Lemma \ref{cyclic}, $G\cong \mathbb{Z}_2$. Also if $G$ is a Frobenius group with the Frobenius kernel $G'$, then Lemma \ref{symetric} implies that $G\cong S_3$. Thus we may assume that $G$ is neither a $p$-group nor a Frobenius group with the Frobenius kernel $G'$. Using Lemma \ref{basic} (c), $|G:G'|=2$. Hence $(G,G')$ is an $F2(2)$-pair. Let $P$ be a Sylow $2$-subgroup of $G$. By Lemma \ref{abelian} (a), $P$ is non-abelian. Also using Lemma \ref{abelian} (b), $G$ has a normal $2$-complement, say $N$. Thus by Lemma \ref{basic} (a), $G/N\cong P$ is a $D'_0$-group. Hence using Lemma \ref{cyclic}, $P\cong \mathbb{Z}_2$ which is impossible. Conversely, assume that $G$ is isomorphic to $\mathbb{Z}_2$ or $S_3$. Then it is easy to see that $G$ is a $D'_0$-group and it completes the proof.\qed


\section*{Acknowledgements}
This research was supported in part
by a grant  from School of Mathematics, Institute for Research in Fundamental Sciences (IPM).


\end{document}